\documentclass[reqno]{amsart}
\textwidth=16.00cm 
\textheight=22.00cm 
\topmargin=0.00cm
\oddsidemargin=0.00cm 
\evensidemargin=0.00cm 
\headheight=0cm 
\headsep=0.5cm
\textheight=610pt
\usepackage{graphicx}
\usepackage{trajan}
\usepackage{multirow}
\usepackage{latexsym,array,delarray,amsthm,amssymb,epsfig}
\usepackage{amsmath}
\usepackage{yhmath}
\usepackage{mathdots}
\usepackage{MnSymbol}
\usepackage{tikz}
\usepackage{adjustbox}
\usepackage{rotating}
\usepackage{longtable}
\usepackage{enumitem}
\usepackage{caption}
\usepackage{wasysym}
\usetikzlibrary{shapes.misc,shadows}
\theoremstyle{plain}
\newtheorem{thm}{Theorem}[section]

\newtheorem*{thm*}{Theorem}
\newtheorem*{lemma*}{Lemma}
\newtheorem*{prop*}{Proposition}
\newtheorem*{cor*}{Corollary}
\newtheorem*{conj*}{Conjecture}

\theoremstyle{definition}

\theoremstyle{remark}

\newcommand{\cc}{\mathbb{C}}

\newcommand{\ca}{\mathcal{A}}

\newcommand{\rf}{\mathfrak{R}}

\begin{document}
\date{}

\title{On Classification of Four Dimensional Nilpotent Leibniz Algebras}
\author{Ismail Demir, Kailash C. Misra and Ernie Stitzinger}
\address{Department of Mathematics, North Carolina State University, Raleigh, NC 27695-8205}
\email{idemir@ncsu.edu, misra@ncsu.edu, stitz@ncsu.edu}
\subjclass[2010]{17A32 , 17A60 }
\keywords{Leibniz Algebra, solvability, nilpotency, classification}
\thanks{KCM is partially supported by Simons Foundation grant \#  307555}

\begin{abstract}
Leibniz algebras are certain generalization of Lie algebras. In this paper we give the classification of four dimensional non-Lie nilpotent Leibniz algebras. We use the canonical forms for the congruence classes of matrices of bilinear forms and some other techniques to obtain our result.

\end{abstract}

\maketitle
\bigskip
\section{Introduction}

Leibniz algebras introduced by Loday \cite{lodayfr} are natural generalization of Lie algebras. Earlier, such algebraic structures had been considered by A. Bloh who called them $D$-algebras \cite{bloh}. In this paper we assume the field to be $\mathbb{C}$, the field of complex numbers. A left (resp. right) Leibniz algebra $A$ is a $\mathbb{C}$-vector space equipped with a bilinear product $[ \, , \,] : A \times A \longrightarrow A$ such that the left (resp. right) multiplication operator is a derivation. A left Leibniz algebra is not necessarily a right Leibniz algebra. In this paper, we consider (left) Leibniz algebras following Barnes \cite{barnes1}. Of course, a Lie algebra is a Leibniz algebra, but not conversely.  For a Leibniz algebra $A$, we define the ideals $A^1 = A $ and $A^i = [A, A^{i-1}]$ for $i \in \mathbb{Z}_{\geq 2}$. The Leibniz algebra $A$ is said be  abelian if $A^2 = 0$. It is nilpotent of class $c$ if $A^{c+1}=0$ but $A^c \neq 0$ for some positive integer $c$. An important abelian ideal of $A$ is $Leib(A)={\rm span}\{[a, a] \mid a\in A\}$. $A$ is a Lie algebra if and only if $Leib(A)= \{0\}$. The left center of $A$ is denoted by $Z^l(A)=\{x\in A\mid [x, a]=0 \  \rm{for \ all} \ a\in A\}$ and the right center of $A$ is denoted by $Z^r(A)=\{x\in A\mid [a, x]=0 \  \rm{for \ all} \ a\in A\}$. The center of $A$ is $Z(A)=Z^l(A)\cap Z^r(A)$. If $A$ is a nilpotent Leibniz algebra of class $c$ then $A^c\subseteq Z(A)$. $A$ is nilpotent and $\dim(Leib(A))=1$ implies $Leib(A)\subseteq Z(A)$. We say that a Leibniz algebra $A$ is split if it can be written as a direct sum of two nontrivial ideals. Otherwise $A$ is said to be non-split.  If $A$ is a non-split Leibniz algebra then $Z(A)\subseteq A^2$.

The classification of nilpotent Lie algebras is a very difficult problem. Due to lack of antisymmetry in Leibniz algebras the classification of non-Lie nilpotent Leibniz algebras is even harder. So far the complete classification of non-Lie nilpotent Leibniz algebras over $\cc$ of dimension less than or equal to four is known (see \cite{fourdimnil},  \cite{3dim}, \cite{cil}, \cite{our},  \cite{lodayfr}, \cite{3comp}). In this paper we revisit the classification of four dimensional non-Lie nilpotent Leibniz algebras using different techniques than that used in \cite{fourdimnil}. In particular, the authors use 
the classification of filiform Leibniz algebras of dimension four and the classification of unitary associative algebras of dimension five in \cite{fourdimnil}. We use congruence classes of bilinear forms as in (\cite{our}, \cite{our2}) for the classification of $4-$dimensional non-Lie nilpotent Leibniz algebras with $\dim(A^2)=2$ and $\dim(Leib(A))=1$. When $\dim(A^2)=1$ (resp. $\dim(A^2)=3$) result follows from  \cite{our2} (resp. \cite{exp}). For the remaining cases we use direct method. Our approach of using congruence classes of bilinear forms can be used to classify Leibniz algebras in higher dimensions which we plan to pursue in future.

\maketitle
\bigskip
\section{Classification of 4-Dimensional Nilpotent Leibniz Algebras}

Let $A$ be a $4-$dimensional non-Lie nilpotent Leibniz algebra. $A$ is non-Lie implies that $Leib(A)\neq 0$,  hence $A^2\neq0$. Then $\dim(A^2)=1, 2 \  \rm{or} \ 3$. It suffices to classify non-split non-Lie nilpotent Leibniz algebras.  The classification of $A$ when 
$\dim(A^2)=1, \ \rm{or} \ 3$ is known \cite{our2, exp}.

\begin{thm} (\cite{our2}  Theorem 2.2) Let $A$ be a $4-$dimensional non-split non-Lie nilpotent Leibniz algebra with $\dim(A^2)=1$. Then $A$ is isomorphic to a Leibniz algebra spanned by $\{x_1, x_2, x_3, x_4\}$ with the nonzero products given by one of  the following:
\begin{description}
\item[$\ca_1$] $[x_1, x_3]= x_4, [x_3, x_2]=x_4$.
\item[$\ca_2$] $[x_1, x_3]= x_4, [x_2, x_2]=x_4, [x_2, x_3]=x_4, [x_3, x_1]=x_4, [x_3, x_2]=-x_4$.
\item[$\ca_3$] $[x_1, x_2]=x_4, [x_2, x_1]=-x_4, [x_3, x_3]=x_4$.
\item[$\ca_4$] $[x_1, x_2]= x_4, [x_2, x_1]=-x_4, [x_2, x_2]=x_4, [x_3, x_3]=x_4$.
\item[$\ca_5$] $[x_1, x_2]= x_4, [x_2, x_1]=cx_4, [x_3, x_3]=x_4, \quad c\in \cc\backslash\{1, -1\}$.
\item[$\ca_6$] $[x_1, x_1]=x_4, [x_2, x_2]=x_4, [x_3, x_3]=x_4$.
\end{description}
\end{thm}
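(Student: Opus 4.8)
The plan is to reduce the whole problem to the congruence classification of a single bilinear form on a three-dimensional space. Since $A$ is nilpotent with $\dim(A^2)=1$, nilpotency forces $A^3=[A,A^2]\subsetneq A^2$, hence $A^3=0$; thus $A$ has class $2$ and $A^2\subseteq Z(A)$. I would fix $x_4$ spanning $A^2$. Because $[A,A]=A^2=\langle x_4\rangle$, every product has the form $[a,b]=\varphi(a,b)\,x_4$ for a unique bilinear form $\varphi\colon A\times A\to\cc$, and since $x_4\in Z(A)$ it lies in both the left and right radicals of $\varphi$. Hence $\varphi$ factors through $\bar A=A/A^2$, a three-dimensional space, and a choice of complement $\langle x_1,x_2,x_3\rangle$ of $A^2$ represents $\varphi$ by a $3\times 3$ matrix. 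Conversely, because $A^2\subseteq Z(A)$ and $A^3=0$, every term of the left Leibniz identity $[a,[b,c]]=[[a,b],c]+[b,[a,c]]$ vanishes, so any such $\varphi$ produces a Leibniz algebra of class at most $2$.

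Next I would pin down the classifying data. Any isomorphism of two such algebras must preserve $A^2$, so it is a scalar $x_4\mapsto\lambda x_4$ together with a linear change of the complement; on matrices this is precisely congruence of $\varphi$ followed by a nonzero rescaling. Thus isomorphism classes correspond to bilinear forms on $\cc^3$ taken up to congruence and scaling. The two structural hypotheses translate cleanly: writing $\varphi=S+K$ with $S$ symmetric and $K$ skew, one has $[a,a]=S(a,a)\,x_4$, so $Leib(A)\neq 0$ (non-Lie) is equivalent to $S\neq 0$; and since $Z(A)=(\text{left rad}\cap\text{right rad of }\varphi\text{ on }\bar A)\oplus\langle x_4\rangle$, the non-split hypothesis $Z(A)\subseteq A^2$ is equivalent to $\varphi$ having trivial common radical.

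It then remains to classify the bilinear forms on $\cc^3$, up to congruence and scaling, with $S\neq 0$ and trivial common radical, and to match representatives with $\ca_1,\dots,\ca_6$. I would split on the rank of $\varphi$. Rank $\le 1$ is impossible: rank $0$ gives the (Lie) abelian algebra, while a rank-$1$ form $\varphi(x,y)=f(x)g(y)$ has $\ker f\cap\ker g\neq 0$, a nonzero common radical, so it is split. The remaining degenerate case has rank exactly $2$ with distinct left and right radicals, and a short reduction puts it in the form $\ca_1$. For nondegenerate $\varphi$ the governing invariant is the asymmetry (cosquare) operator $M=\varphi^{-1}\varphi^{\mathsf T}$, whose conjugacy class is unchanged by scaling and transforms by conjugation under congruence. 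Enumerating the Jordan types of $M$ on $\cc^3$ yields $M=I$ (the symmetric form $\ca_6$), a single block $J_3(1)$ ($\ca_2$), $\mathrm{diag}(-1,-1,1)$ ($\ca_3$), $J_2(-1)\oplus(1)$ ($\ca_4$), and $\mathrm{diag}(c,c^{-1},1)$ with $c\neq\pm1$ ($\ca_5$); here the symmetry $c\leftrightarrow c^{-1}$ comes from interchanging the two isotropic basis vectors. The eigenvalues $\pm1$ are exactly those admitting symmetric or alternating summands, which is why $c=1$ collapses to $\ca_6$ and $c=-1$ to $\ca_3$, forcing the restriction $c\in\cc\setminus\{1,-1\}$.

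The main obstacle is this last step: the congruence classification of \emph{non}-symmetric bilinear forms is genuinely more subtle than the symmetric theory, and I expect to have to invoke the Riehm--Gabriel structure theorem (or carry out the equivalent reduction by hand in dimension three), treating the exceptional eigenvalues $\pm1$ and their nontrivial Jordan blocks with care, since for those eigenvalues the cosquare conjugacy class must be refined by the congruence type of the block. Finally I would verify that the six algebras are pairwise non-isomorphic and that $\ca_5$ is a genuine one-parameter family, using the invariants already assembled---the rank of $\varphi$, the rank of its symmetric part $S$, and the spectrum of $M$ modulo $c\mapsto c^{-1}$---which separate all the cases.
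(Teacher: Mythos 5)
Your overall strategy---reading off a bilinear form $\varphi$ on $A/A^2\cong\cc^3$, noting that the Leibniz identity is vacuous in nilpotency class $2$, and classifying $\varphi$ up to congruence and scaling with ``non-Lie $\Leftrightarrow$ symmetric part nonzero'' and ``non-split $\Leftrightarrow$ trivial common radical''---is exactly the method this paper relies on: the theorem is imported from \cite{our2}, and the same congruence-of-bilinear-forms argument is what the authors run for Theorems 2.3 and 2.4. All of those reductions in your write-up are correct.

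The gap is in your degenerate case. You assert that a rank-$2$ form with trivial common radical reduces to $\ca_1$ after ``a short reduction''; that is false. On $\cc^3$ there are \emph{two} congruence classes of singular forms with zero common radical and nonzero symmetric part, namely $J_3(0)=\left(\begin{smallmatrix}0&1&0\\0&0&1\\0&0&0\end{smallmatrix}\right)$, which is $\ca_1$, and $J_2(0)\oplus(1)=\left(\begin{smallmatrix}0&1&0\\0&0&0\\0&0&1\end{smallmatrix}\right)$, which is the algebra $[x_1,x_2]=x_4$, $[x_3,x_3]=x_4$, i.e.\ $\ca_5$ with $c=0$. These are not congruent: their symmetric parts have ranks $2$ and $3$ respectively, and this is invariant under congruence and under rescaling of $\varphi$. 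Your nondegenerate analysis cannot recover the second class either, since the cosquare $\varphi^{-1}\varphi^{\mathsf T}$ and the representative $\mathrm{diag}(c,c^{-1},1)$ presuppose $c\neq 0$. As written, your proof therefore produces $\ca_5$ only for $c\in\cc\setminus\{0,1,-1\}$ and silently drops the $c=0$ member of the family; to repair it you must split the rank-$2$ case into the two canonical types $J_3(0)$ and $J_2(0)\oplus(1)$ and match the latter with $\ca_5(0)$.
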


\begin{thm} (\cite{exp} Corollary 3) Let $A$ be a $4-$dimensional non-split non-Lie nilpotent Leibniz algebra with $\dim(A^2)=3$. Then $A$ is isomorphic to a Leibniz algebra spanned by $\{x_1, x_2, x_3, x_4\}$ with the nonzero products given by the following:
\\
$\ca_{7}: [x_1, x_1]=x_2, [x_1, x_2]=x_3, [x_1, x_3]=x_4.$
\end{thm}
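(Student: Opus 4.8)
The plan is to exploit that $\dim(A^2)=3$ forces $A$ to be generated by a single element, which rigidly determines the multiplication table. Since $\dim A = 4$ and $\dim A^2 = 3$, the abelianization $A/A^2$ is one-dimensional, so $A$ is generated as a Leibniz algebra by any $x_1 \in A \setminus A^2$ (writing $A = \mathbb{C}x_1 + A^2$ and substituting repeatedly). First I would record the standard product rule $[A^i, A^j] \subseteq A^{i+j}$ for the lower central series $A^i = [A, A^{i-1}]$; this follows by induction on $i$ from the left Leibniz identity, which lets one rewrite $[[c,d],b] = [c,[d,b]] - [d,[c,b]]$ for $c\in A$, $d \in A^{i-1}$, $b\in A$ and conclude membership in $A^{i+1}$. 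Combining $A = \mathbb{C}x_1 + A^2$ with this rule gives $A^{i+1} \equiv [x_1, A^i] \pmod{A^{i+2}}$, so each graded piece $A^i/A^{i+1}$ has dimension at most one. Since $A$ is nilpotent the chain descends strictly to $0$, and as the three layers below $A^2$ must account for $\dim A^2 = 3$, the dimensions are forced to be $\dim A^i = 4-i$ for $1 \le i \le 4$ with $A^5 = 0$; in particular $A$ is filiform of nilpotency class $4$.

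Next I would build the adapted basis. Set $x_2 = [x_1,x_1]$, $x_3 = [x_1, x_2]$, and $x_4 = [x_1, x_3]$. The congruence $A^2 \equiv \mathbb{C}[x_1,x_1] \pmod {A^3}$ from the previous step, together with $\dim(A^2/A^3) = 1$, shows $x_2 \notin A^3$; hence $x_2 \neq 0$ and spans $A^2$ modulo $A^3$. Iterating, $x_3$ spans $A^3$ modulo $A^4$ and $x_4$ spans $A^4$, so $\{x_1, x_2, x_3, x_4\}$ is a basis compatible with the filtration. By construction this basis already realizes the three products of $\ca_7$, namely $[x_1,x_1]=x_2$, $[x_1,x_2]=x_3$, and $[x_1,x_3]=x_4$, with no rescaling required.

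Finally I would show that every remaining product vanishes, so that no further change of basis is needed. By the grading, any $[x_i,x_j]$ with $i+j \ge 5$ lies in $A^{\ge 5} = 0$, which disposes of all products except $[x_2,x_1]$, $[x_3,x_1]$ and $[x_2,x_2]$. Each of these is killed by the left Leibniz identity: evaluating at $(a,b,c) = (x_1,x_1,x_1)$ gives $[x_2,x_1]=0$; at $(x_1,x_1,x_2)$ it gives $[x_2,x_2]=0$; and at $(x_1,x_2,x_1)$, together with $[x_2,x_1]=0$, it gives $[x_3,x_1] = -[x_2,x_2] = 0$. Hence the only nonzero products are exactly those of $\ca_7$, and $A \cong \ca_7$.

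I expect the main obstacle to be the structural bookkeeping of the first paragraph, namely establishing $[A^i,A^j]\subseteq A^{i+j}$ and pinning down the precise layer dimensions $4,3,2,1$. Because the Leibniz product is not antisymmetric, the lower central series behaves less predictably than in the Lie case, and one must check carefully that the right multiplications $[A^2, x_1]$ do not secretly enlarge $A^2/A^3$. Once the filiform structure and the single generator $x_1$ are in hand, the collapse of all remaining products through the Leibniz identity is entirely routine.
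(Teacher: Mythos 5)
Your argument is correct and complete. Note that the paper itself offers no proof of this statement at all: it simply imports the result as Corollary~3 of the cited Gorbatsevich preprint, so there is nothing internal to compare against. Your proof supplies the missing details in the standard way: the key observations are that $[A^i,A^j]\subseteq A^{i+j}$ (which you correctly derive by induction from the left Leibniz identity in the form $[[c,d],b]=[c,[d,b]]-[d,[c,b]]$), that $A=\cc x_1+A^2$ forces each layer $A^i/A^{i+1}$ for $i\ge 2$ to be at most one-dimensional, and hence that $\dim(A^2)=3$ pins down the filiform layer dimensions $3,2,1,0$. Your verification that the only surviving products are the six with weight $i+j\le 4$, and that $[x_2,x_1]$, $[x_2,x_2]$, $[x_3,x_1]$ all vanish by direct substitution into the Leibniz identity, is accurate; in particular the specialization $(x_1,x_1,x_1)$ gives $[[x_1,x_1],x_1]=[x_1,[x_1,x_1]]-[x_1,[x_1,x_1]]=0$ exactly as you claim, and the case $[x_3,x_1]$ correctly uses the previously established $[x_2,x_1]=0$ together with $[x_2,x_2]=0$. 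The hypotheses ``non-split'' and ``non-Lie'' turn out to be automatic consequences rather than needed inputs, which is consistent with the statement. This is a sound, self-contained replacement for the external citation.
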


It is left to consider the case $\dim(A^2)=2$. Since $A$ is nilpotent $\dim(A^3)=0$ or $\dim(A^3)=1$. Since $Leib(A)\subseteq A^2$ we have $\dim(Leib(A))=1$ or $\dim(Leib(A))=2$. Suppose $\dim(Leib(A))=1$.
Let $Leib(A)={\rm span}\{e_4\}$, and extend this to a basis $\{e_3, e_4\}$ for $A^2$. Let $V$ be a complementary subspace to $A^2$ in $A$ so that $A=A^2\oplus V$. Then for any $u, v\in V$, we have $[u, v]= \alpha e_3+ \alpha' e_4$ for some $\alpha , \alpha' \in \cc$. Define the bilinear form $f( \ , \ ): V\times V\rightarrow \cc$ by $f(u, v)= \alpha'$ for all $u, v\in V$. 
\par
Using ( \cite{congr}, Theorem 3.1), we choose a basis $\{e_1, e_2\}$ for $V$ so that the matrix $N$ of the bilinear form $f( \ , \ ): V\times V\rightarrow \cc$ is one the following:
\vskip 7pt
\noindent $(i) \left( \begin{array}{cc}
0&1 \\
-1&0
\end{array} \right), \quad
(ii) \left( \begin{array}{cc}
1&0 \\
0&0
\end{array} \right), \quad
(iii) \left( \begin{array}{cc}
1&0 \\
0&1
\end{array} \right), \quad
(iv) \left( \begin{array}{cc}
0&1 \\
-1&1
\end{array} \right), \quad
(v) \left( \begin{array}{cc}
0&1 \\
c&0
\end{array} \right)$\quad , 

\noindent 
\vskip 7pt ${\rm where} \ \ c \neq 1, -1$. However, since $e_4\in Leib(A)$ we observe that $N$ cannot be the matrix (i).

\begin{thm} Let $A$ be a $4-$dimensional non-split non-Lie nilpotent Leibniz algebra with $\dim(A^2)=2$, $\dim(A^3)=0$ and $\dim(Leib(A))=1$. Then $A$ is isomorphic to a Leibniz algebra spanned by $\{x_1, x_2, x_3, x_4\}$ with the nonzero products given by one of  the following:
\begin{description}
\item[$\ca_{8}$] $[x_1, x_1]=x_4, [x_1, x_2]=x_3=-[x_2, x_1]$.
\item[$\ca_{9}$] $[x_1, x_1]=x_4, [x_1, x_2]=x_3=-[x_2, x_1], [x_2, x_2]=x_4$.
\end{description}
\end{thm}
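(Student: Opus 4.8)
The plan is to use the hypothesis $\dim(A^3)=0$ to collapse the entire algebra structure into a pair of bilinear forms on the two–dimensional quotient $A/A^2$, and then to read off the classification from a single congruence invariant. First I would show that $A^2\subseteq Z(A)$. Since $A^3=[A,A^2]=0$ we immediately get $A^2\subseteq Z^r(A)$, and feeding a generator $[b,c]$ of $A^2$ into the Leibniz identity $[b,[c,a]]=[[b,c],a]+[c,[b,a]]$, together with $[c,a],[b,a]\in A^2$, gives $[[b,c],a]=0$, so also $A^2\subseteq Z^l(A)$. Hence the only nonzero products occur among the basis vectors $e_1,e_2$ of $V$, and I record them as $[e_i,e_j]=M_{ij}e_3+N_{ij}e_4$ for $i,j\in\{1,2\}$, where $N=(N_{ij})$ is exactly the matrix of $f$ from the setup and $M=(M_{ij})$ collects the $e_3$–components. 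Because all triple products vanish, the Leibniz identity holds automatically, so every such pair $(M,N)$ defines a nilpotent Leibniz algebra.

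Next I would extract the two structural constraints. Writing $v=xe_1+ye_2$, the $e_3$–component of $[v,v]$ is $x^2M_{11}+xy(M_{12}+M_{21})+y^2M_{22}$ while its $e_4$–component is $v^{T}Nv$. Since $\mathrm{Leib}(A)=\langle e_4\rangle$, the $e_3$–component must vanish identically, forcing the symmetric part of $M$ to be zero; thus $M=aJ$ with $J=\left(\begin{smallmatrix}0&1\\-1&0\end{smallmatrix}\right)$. The requirement $\dim(A^2)=2$ forces $e_3\in A^2$, which is possible only if $a\neq0$, and after rescaling $e_3$ I may take $a=1$, so $M=J$.

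The crucial step is to understand the isomorphisms. Such an isomorphism is governed by a change of basis $P\in GL_2$ on $V$ together with a transformation $e_3\mapsto s_{33}e_3'+s_{43}e_4'$, $e_4\mapsto s_{44}e_4'$ of $A^2$ preserving $\mathrm{Leib}$; matching structure constants yields
\[
s_{33}M=P^{T}M'P,\qquad s_{43}M+s_{44}N=P^{T}N'P.
\]
The second identity is the key: since $M=J\neq0$, the free parameter $s_{43}$ lets me add an arbitrary multiple of $J$ to $N$, i.e.\ I may discard the antisymmetric part of $N$ and retain only its symmetric part $N_s$. The remaining freedom is then congruence $N_s\mapsto P^{T}N_sP$ (the accompanying change of $M=J$ by $\det P$ being reabsorbed by rescaling $e_3$) together with scaling, and over $\cc$ symmetric matrices are classified up to congruence by their rank, which scaling preserves.

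Finally I would run through the ranks. Rank $0$ is impossible, since then the $e_4$–component of $[v,v]$ vanishes, contradicting $e_4\in\mathrm{Leib}(A)$ (this is precisely why case (i) was excluded). Rank $1$ gives $N_s\cong\mathrm{diag}(1,0)$, producing $\ca_8$, while rank $2$ gives $N_s\cong I$, producing $\ca_9$; concretely the congruence forms (ii) and (iv) both have rank-one symmetric part and collapse to $\ca_8$, whereas (iii) and (v) both have rank-two symmetric part (here $c\neq-1$ is used) and collapse to $\ca_9$. The two algebras are genuinely distinct because the rank of the intrinsic quadratic form $\bar v\mapsto[v,v]$ on $A/A^2$ is an isomorphism invariant, equal to $1$ for $\ca_8$ and $2$ for $\ca_9$, and a direct check shows both are non-split and non-Lie. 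I expect the main obstacle to be the bookkeeping in the isomorphism step that justifies discarding the antisymmetric part of $N$; it is exactly this observation that reduces the five congruence classes (four after excluding (i)) to the two algebras in the statement.
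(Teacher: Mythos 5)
Your argument is correct, and it reaches the two algebras by a genuinely different route from the paper's. Both proofs share the setup --- $A^2\subseteq Z(A)$, products encoded by a pair of $2\times 2$ matrices $(M,N)$ on $V\cong A/A^2$ with the symmetric part of $M$ killed by $\dim(Leib(A))=1$ --- but from there the paper simply runs through the four admissible congruence canonical forms (ii)--(v) of $N$ one at a time and exhibits an explicit change of basis to $\ca_{8}$ or $\ca_{9}$ in each case (the bases for (iv) and especially (v) involving rather opaque coefficients such as $\frac{i}{2^{2/3}((1+c)^2\alpha^2)^{1/6}}$). You instead analyze the isomorphism relation directly: the freedom $e_3\mapsto s_{33}e_3'+s_{43}e_4'$ lets you absorb the antisymmetric part of $N$ into $M=J$, so the only surviving invariant is the congruence-and-scaling class of the symmetric part $N_s$, i.e.\ (over $\cc$) its rank --- equivalently the rank of the intrinsic quadratic form $\bar v\mapsto[v,v]$ on $A/A^2$ valued in $Leib(A)$. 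This buys three things: it explains \emph{why} (ii) and (iv) collapse to $\ca_{8}$ while (iii) and (v) collapse to $\ca_{9}$, rather than just verifying it by computation; it yields the pairwise non-isomorphism of $\ca_{8}$ and $\ca_{9}$ for free, which the paper defers to a Mathematica check; and it does not actually need the full congruence classification of arbitrary bilinear forms from \cite{congr}, only the rank classification of symmetric ones. The one point to state carefully is the converse direction of your isomorphism analysis --- that any $(P,s_{33},s_{43},s_{44})$ with $P$ invertible, $s_{33},s_{44}\neq 0$ satisfying your two matrix equations really does define an isomorphism, and that every isomorphism arises this way --- but both claims are immediate here: all brackets land in the center, so the $A'^2$-components of the images of $e_1,e_2$ cannot affect any product, and matching structure constants is exactly the homomorphism condition.
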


\begin{proof} Since $A$ is non-split nilpotent Leibniz algebra we have $A^2=Z(A)$. As $Leib(A)={\rm span}\{e_4\}$, and $A^2=Z(A)={\rm span}\{e_3, e_4\}$ we have $A=\rm span\{e_1, e_2, e_3, e_4\}$. 

\par If $N$ is the matrix $(ii)$ we have the nontrivial products in $A$ given by $[e_1, e_1]=e_4, [e_1, e_2]=\alpha e_3=-[e_2, e_1]$. Note that  $\alpha\in\cc\backslash\{0\}$ because $\dim(A^2)=1$ otherwise. Hence the base change $x_1=e_1, x_2=e_2, x_3=\alpha e_3, x_4=e_4$ shows that $A$ is isomorphic to the algebra $\ca_{8}$. If $N$ is the matrix $(iv)$ we have the nontrivial products in $A$ given by $[e_1, e_2]=\alpha e_3+e_4=-[e_2, e_1], [e_2, e_2]=e_4, \ \alpha\in\cc\backslash\{0\}.$ The change of basis $x_1=e_2, x_2=e_1, x_3=-\alpha e_3-e_4, x_4=e_4$ shows that $A$ is isomorphic to the algebra $\ca_{8}$ again. 

\par If $N$ is the matrix $(iii)$ we have the nontrivial products in $A$ given by $[e_1, e_1]=e_4, [e_1, e_2]=\alpha e_3=-[e_2, e_1], [e_2, e_2]=e_4, \ \alpha\in\cc\backslash\{0\}.$ Hence the base change $x_1=e_1, x_2=e_2, x_3=\alpha e_3, x_4=e_4$ shows that $A$ is isomorphic to the algebra $\ca_{9}$. If $N$ is the matrix $(v)$ then we have the nontrivial products in $A$ given by $[e_1, e_2]=\alpha e_3+e_4, [e_2, e_1]=-\alpha e_3+ce_4, \ \alpha\in\cc\backslash\{0\}.$ The base change $x_1=\frac{i}{2^{2/3}((1+c)^2\alpha^2)^{1/6}}e_1+ \frac{i(1+c)\alpha}{((1+c)^2\alpha^2)^{1/2}}e_2, x_2=-\frac{(1+c)\alpha}{2^{2/3}((1+c)^2\alpha^2)^{2/3}}e_1+e_2, x_3=\frac{i2^{1/3}\alpha}{((1+c)^2\alpha^2)^{1/6}}e_3-\frac{i(c-1)}{2^{2/3}((1+c)^2\alpha^2)^{1/6}}e_4, x_4=-\frac{((1+c)^2\alpha^2)^{1/3}}{2^{2/3}\alpha}e_4$ shows that $A$ is isomorphic to the algebra $\ca_{9}$ again.

\end{proof}

\begin{thm} Let $A$ be a $4-$dimensional non-split non-Lie nilpotent Leibniz algebra with $\dim(A^2)=2$ and $\dim(Leib(A))=1=\dim(A^3)$. Then $A$ is isomorphic to a Leibniz algebra spanned by $\{x_1, x_2, x_3, x_4\}$ with the nonzero products given by one of the following:
\begin{description}
\item[$\ca_{10}$] $[x_1, x_1]=x_4, [x_1, x_2]=x_3=-[x_2, x_1], [x_1, x_3]=x_4=-[x_3, x_1]$.
\item[$\ca_{11}$] $[x_1, x_2]=x_3=-[x_2, x_1], [x_2, x_2]=x_4, [x_1, x_3]=x_4=-[x_3, x_1]$.
\item[$\ca_{12}$] $[x_1, x_1]=x_4, [x_1, x_2]=x_3, [x_2, x_1]=-x_3+x_4, [x_1, x_3]=x_4=-[x_3, x_1]$.
\item[$\ca_{13}$] $[x_2, x_2]=x_4, [x_1, x_2]=x_3, [x_2, x_1]=-x_3+x_4, [x_1, x_3]=x_4=-[x_3, x_1]$.
\end{description}
\end{thm}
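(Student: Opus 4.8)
The plan is to first pin down the multiplication table completely in terms of a few scalars, and then eliminate those scalars by base changes organised around the canonical matrix $N$.

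I would begin by locating the ideals. Since $A^3\neq 0$ forces $A^4\subsetneq A^3$ and hence $A^4=0$, the algebra has class $3$, so $A^3\subseteq Z(A)\subseteq A^2$. The first key point is that $A^3=Leib(A)={\rm span}\{e_4\}$. Writing $A^2={\rm span}\{e_3,e_4\}$ with $e_4$ spanning $Leib(A)$, the quotient $A/Leib(A)$ is a $3$-dimensional nilpotent Lie algebra with one-dimensional derived algebra, i.e. the Heisenberg algebra, so $\overline{e_3}$ is central there and every $[e_i,e_3]$ lands in $Leib(A)$. If $A^3$ had a nonzero $e_3$-component, then $A^4=[A,A^3]$ would contain a nonzero $[e_i,e_3]$, contradicting $A^4=0$; hence $A^3\subseteq{\rm span}\{e_4\}$ and equality holds by dimension. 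Consequently $e_3\notin Z(A)$ and $Z(A)={\rm span}\{e_4\}$.

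With this in hand, antisymmetry modulo $Leib(A)$ forces the table $[e_1,e_1]=re_4$, $[e_1,e_2]=\alpha e_3+pe_4$, $[e_2,e_1]=-\alpha e_3+qe_4$, $[e_2,e_2]=se_4$, together with $[e_1,e_3]=b_1e_4$, $[e_2,e_3]=b_2e_4$, $[e_3,e_1]=c_1e_4$, $[e_3,e_2]=c_2e_4$, $[e_3,e_3]=b_3e_4$, and $e_4$ central, where $\left(\begin{smallmatrix} r&p\\ q&s\end{smallmatrix}\right)=N$ is one of the canonical matrices (ii)--(v). Feeding the left Leibniz identity $[x,[y,z]]=[[x,y],z]+[y,[x,z]]$ into the triples built from $e_1,e_2,e_3$ yields $\alpha b_3=0$ and $\alpha(b_i+c_i)=0$. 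Since $\dim A^2=2$ demands $\alpha\neq 0$ (only $[e_1,e_2]$ and $[e_2,e_1]$ carry an $e_3$-component), this gives $c_i=-b_i$ and $b_3=0$, while $A^3\neq 0$ forces $(b_1,b_2)\neq(0,0)$. Thus the isomorphism type is governed entirely by the pair $\big(N,(b_1,b_2)\big)$.

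Now I would run the four canonical matrices. The organising invariants are the quadratic form $Q(a_1,a_2)=ra_1^2+(p+q)a_1a_2+sa_2^2$ on $A/A^2$ (the $e_4$-component of $[a,a]$) and the functional $\beta=(b_1,b_2)$ cutting out $A^3$. Matrices (ii) and (iv) give ${\rm rank}\,Q=1$, and the two orbits of $(b_1,b_2)$ under the residual freedom split according to whether ${\rm rad}\,Q\subseteq\ker\beta$, landing in $\ca_{10}$ or $\ca_{11}$; matrices (iii) and (v) give ${\rm rank}\,Q=2$, and the split is according to whether $\beta$ is isotropic for $Q$ (namely $b_1^2+b_2^2=0$ for (iii), resp. $b_1b_2=0$ for (v)), landing in $\ca_{12}$ or $\ca_{13}$. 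In particular each target is realised, which is a reassuring consistency check. Concretely, for each $N$ I would use the isometries of $N$ together with the rescalings of $e_3,e_4$ (which are tied to $\alpha$ and to the determinant of the $GL_2(\cc)$-transformation) to normalise $(b_1,b_2)$, and then write down the explicit change of basis to the appropriate $\ca_k$.

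The main obstacle is the non-degenerate cases (iii) and (v): there the normalising transformation requires complex square roots and, for (v), must absorb the parameter $c$, producing cumbersome formulas of the same flavour as the one appearing in the proof of the preceding theorem. The delicate bookkeeping is that the rescaling of $e_3$ forced by a chosen transformation in $GL_2(\cc)$ is exactly the one that makes the $e_4$-coefficients match the target; and verifying that the isotropic/non-isotropic dichotomy of $\beta$ is preserved under all admissible base changes—so that (iii) and (v) really do split cleanly into $\ca_{12}$ and $\ca_{13}$ with no collapse—is the crux of the argument.
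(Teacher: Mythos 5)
Your outline follows essentially the same route as the paper: reduce to $Leib(A)=Z(A)=A^{3}={\rm span}\{e_4\}$, use the canonical congruence matrices (ii)--(v) for the form on $V=A/A^{2}$, extract $[e_3,e_3]=0$ and $[e_i,e_3]=-[e_3,e_i]$ from the Leibniz identity (your relations $\alpha b_3=0$, $\alpha(b_i+c_i)=0$ are exactly the paper's identities \eqref{leib1}), and then split each matrix into two subcases. Your reformulation of the paper's ad hoc dichotomies ($\gamma=0$ versus $\gamma\neq 0$ for (ii)/(iv), $\beta^{2}+\gamma^{2}=0$ versus $\neq 0$ for (iii)/(v)) as ${\rm rad}\,Q\subseteq\ker\beta$ and isotropy of $\beta$ with respect to $Q$ is a genuine conceptual improvement, and it does match the paper's conditions (e.g.\ the paper's reduction of (v) to (iii) sends $\beta^{2}+\gamma^{2}$ to a multiple of $\beta\gamma$, exactly your $b_1b_2$ criterion). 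The one substantive shortfall is that the explicit normalising base changes --- which constitute almost the entire body of the paper's proof, including the square-root-laden transformations for cases (iii) and (v) --- are promised but not exhibited, and the claim that each subcase forms a single orbit under the residual isometries and rescalings is asserted rather than verified; until those computations are written down the argument is a correct and well-aimed plan rather than a complete proof. Note also that pairwise non-isomorphism of $\ca_{10}$--$\ca_{13}$ (your ``no collapse'' crux) is not actually part of this theorem; the paper defers it to a separate computational check at the end.
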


\begin{proof} Since $A$ is non-split nilpotent Leibniz algebra we have $A^3\subseteq Z(A)$ and $Z(A)\subseteq A^2$. Then $A^3=Z(A)$. Using $\dim(Leib(A))=1$ with the fact that $A$ is nilpotent we get $Leib(A)=Z(A)=A^3={\rm span}\{e_4\}$. We have $A^2={\rm span}\{e_3, e_4\}$ and $A={\rm span}\{e_1, e_2, e_3, e_4\}$.  Consider the following Leibniz identities:
\begin{equation} \label{leib1}
\begin{cases}
[e_1, [e_2, e_1]]=[[e_1, e_2], e_1]+ [e_2, [e_1, e_1]] \\
[e_2, [e_1, e_2]]=[[e_2, e_1], e_2]+ [e_1, [e_2, e_2]] \\
[e_3, [e_1, e_2]]=[[e_3, e_1], e_2]+ [e_1, [e_3, e_2]]
\end{cases}
\end{equation}

If $N$ is the matrix $(ii)$ then by the Leibniz identities (\ref{leib1}) the nontrivial products in $A$ are given by 
\begin{equation} 
[e_1, e_1]=e_4, [e_1, e_2]=\alpha e_3=-[e_2, e_1], [e_1, e_3]=\beta e_4=-[e_3, e_1], [e_2, e_3]=\gamma e_4=-[e_3, e_2].      \label{alg3}
\end{equation}
Here $\beta, \gamma \in\cc$ and $\alpha\in\cc\backslash\{0\}$ since $\dim(A^2)=2$. Suppose $\gamma=0$. Then $\beta\neq0$ since $A^3\neq0$. The base change $x_1=e_1, x_2=\frac{1}{\alpha\beta}e_2, x_3=\frac{1}{\beta}e_3, x_4=e_4$ yields that $A$ is isomorphic to the algebra $\ca_{10}$. Now suppose $\gamma\neq0$. Then the base change $x_1=e_2, x_2=-\alpha\gamma e_1+\alpha\beta e_2, x_3=\gamma\alpha^2e_3, x_4=(\alpha\gamma)^2e_4$ shows that $A$ is isomorphic to the algebra $\ca_{11}$. If $N$ is the matrix $(iv)$ then by the Leibniz identities (\ref{leib1}) the nontrivial products in $A$ are given by $[e_1, e_2]=\alpha e_3+ e_4=-[e_2, e_1], [e_2, e_2]=e_4, [e_1, e_3]=\beta e_4=-[e_3, e_1], [e_2, e_3]=\gamma e_4=-[e_3, e_2], \ \beta, \gamma \in\cc, \alpha\in\cc\backslash\{0\}$. Then the following change of basis $x_1=e_2, x_2=e_1, x_3=-\alpha e_3-e_4, x_4=e_4$ gives that $A$ is isomorphic to the following algebra $[x_1, x_1]=x_4, [x_1, x_2]=x_3=-[x_2, x_1], [x_1, x_3]=-\alpha\gamma x_4=-[x_3, x_1], [x_2, x_3]=-\alpha\beta x_4=-[x_3, x_2].$ This algebra is isomorphic to an algebra with the nontrivial products given by (\ref{alg3}). Therefore, $A$ is isomorphic to (\ref{alg3}), and hence, as above  isomorphic to $\ca_{10}$ or $\ca_{11}$.

\par If $N$ is the matrix $(iii)$ then by the Leibniz identities (\ref{leib1}) the nontrivial products in $A$ are given by
\begin{equation}
[e_1, e_1]=e_4=[e_2, e_2], [e_1, e_2]=\alpha e_3=-[e_2, e_1], [e_1, e_3]=\beta e_4=-[e_3, e_1], [e_2, e_3]=\gamma e_4=-[e_3, e_2]. \label{alg4}
\end{equation}
Here $\beta, \gamma \in\cc$ and $\alpha\in\cc\backslash\{0\}$ since $\dim(A^2)=2$. Let $\beta^2+\gamma^2=0$. Note that $A^3\neq0$ implies that $\beta, \gamma\neq0$. The change of basis $x_1=-\frac{2\gamma}{\beta^2\alpha}e_1, x_2=-\frac{\gamma}{\beta^2\alpha}e_1+ \frac{1}{\beta\alpha}e_2, x_3=-\frac{2\gamma}{\beta^3\alpha}e_3+\frac{2\gamma^2}{\beta^4\alpha^2}e_4, x_4=\frac{4\gamma^2}{\beta^4\alpha^2}e_4$ shows that $A$ is isomorphic to the algebra $\ca_{12}$. Now let $\beta^2+\gamma^2\neq0$. The base change $x_1=\frac{2\gamma-2i\beta}{\alpha(\beta^2+\gamma^2)}e_1-\frac{2\beta+2i\gamma}{\alpha(\beta^2+\gamma^2)}e_2, x_2=\frac{4\gamma}{\alpha(\beta^2+\gamma^2)}e_1-\frac{4\beta}{\alpha(\beta^2+\gamma^2)}e_2-\frac{4i}{\alpha(\beta^2+\gamma^2)}e_3, x_3=\frac{8i}{\alpha(\beta^2+\gamma^2)}e_3, x_4=\frac{16}{\alpha^2(\beta^2+\gamma^2)}e_4$ yields that $A$ is isomorphic to the algebra $\ca_{13}$. If $N$ is the matrix $(v)$ then by the Leibniz identities (\ref{leib1}) we have the nontrivial products in $A$ given by $[e_1, e_2]=\alpha e_3+ e_4, [e_2, e_1]=-\alpha e_3+ ce_4, [e_1, e_3]=\beta e_4=-[e_3, e_1], [e_2, e_3]=\gamma e_4=-[e_3, e_2],  \ \beta, \gamma \in\cc, \alpha\in\cc\backslash\{0\}$. The following change of basis $x_1=-ie_1+ie_2, x_2=e_1+e_2, x_3=-2ie_3+\frac{i(c-1)}{\alpha}e_4, x_4=(c+1)e_4$  shows that $A$ is isomorphic to an algebra with the notrivial products given by $[x_1, x_1]=x_4=[x_2, x_2], [x_1, x_2]=\alpha x_3=-[x_2, x_1], [x_1, x_3]=\frac{-2\beta+2\gamma}{c+1} x_4=-[x_3, x_1], [x_2, x_3]=\frac{-2i\beta-2i\gamma}{c+1} x_4=-[x_3, x_2].$ This algebra is isomorphic to an algebra with nontrivial products given in  (\ref{alg4}). Therefore, as above $A$ is isomorphic to $\ca_{12}$ or $\ca_{13}$.
\end{proof}

The remaining cases can be completed via direct method. 

\begin{thm} Let $A$ be a $4-$dimensional non-split non-Lie nilpotent Leibniz algebra with  $\dim(A^2)=2=\dim(Leib(A))$ and $\dim(A^3)=0$. Then $A$ is isomorphic to a Leibniz algebra spanned by $\{x_1, x_2, x_3, x_4\}$ with the nonzero products given by the following:
\begin{description}
\item[$\ca_{14}$] $[x_1, x_1]=x_3, [x_1, x_2]=x_4.$
\item[$\ca_{15}$] $[x_1, x_1]=x_3, [x_2, x_1]=x_4.$
\item[$\ca_{16}$] $[x_1, x_2]=x_4, [x_2, x_1]=x_3, [x_2, x_2]=-x_3.$
\item[$\ca_{17}$] $[x_1, x_1]=x_3, [x_1, x_2]=x_4, [x_2, x_1]=\alpha x_4, \quad \alpha\in \cc\backslash\{-1, 0\}.$
\item[$\ca_{18}$] $[x_1, x_1]=x_3, [x_2, x_1]=x_4, [x_1, x_2]=\alpha x_3, [x_2, x_2]=-x_4, \quad \alpha\in \cc\backslash\{-1\}.$
\item[$\ca_{19}$] $[x_1, x_1]=x_3, [x_1, x_2]=x_3, [x_2, x_1]=x_3+ x_4, [x_2, x_2]=x_4.$
\end{description}
\end{thm}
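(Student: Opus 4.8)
The plan is to realize every such $A$ as an abelian central extension and reduce the classification to a normal-form problem for a single $A^2$-valued bilinear form, in the same spirit as the congruence-of-bilinear-forms approach used earlier in the paper.

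First I would pin down $A^2$. Since $A^3=0$ we have $[A,A^2]=0$, and the left Leibniz identity with a repeated argument gives $[[x,x],z]=0$, so $Leib(A)\subseteq Z^l(A)$; as $\dim(Leib(A))=2=\dim(A^2)$ and $Leib(A)\subseteq A^2$, we get $Leib(A)=A^2$ and hence $[A^2,A]=0$ as well. Thus $A^2\subseteq Z(A)$, and since $A$ is non-split $Z(A)\subseteq A^2$, so $A^2=Z(A)=Leib(A)$ is a $2$-dimensional central ideal. Fixing a basis $\{e_3,e_4\}$ of $A^2$ and a complement $V={\rm span}\{e_1,e_2\}$, the whole product is encoded by $\mu\colon V\times V\to A^2$, $\mu(u,v)=[u,v]$, since every bracket meeting $e_3$ or $e_4$ vanishes. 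Crucially, \emph{every} Leibniz identity holds automatically: each inner bracket lies in the central $A^2$, so all three terms of $[a,[b,c]]=[[a,b],c]+[b,[a,c]]$ vanish. Hence any $\mu$ defines a Leibniz algebra, and isomorphism classes correspond exactly to orbits of $\mu$ under $GL(V)\times GL(A^2)$ (translating $e_1,e_2$ by central elements changes no bracket), subject to $\mathrm{im}\,\mu=A^2$ and ${\rm span}\{\mu(v,v)\}=A^2$.

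Next I would split $\mu=S+K$ into symmetric and antisymmetric parts. By polarization ${\rm span}\{\mu(v,v)\}=\mathrm{im}\,S$, so the $Leib(A)$ condition becomes ``$S$ surjective onto $A^2$''; and since $\dim\wedge^2V=1$, the antisymmetric part is a single vector $k=\tfrac12([e_1,e_2]-[e_2,e_1])\in A^2$, otherwise unconstrained. Viewing $S$ as a pencil of binary quadratic forms (its $e_3$- and $e_4$-components $S_3,S_4$), I would classify it up to $GL(V)$-congruence together with $GL(A^2)$-recombination of $(S_3,S_4)$. The discriminant of the pencil has either two distinct roots or a double root, giving exactly two normal forms: the \emph{secant} pencil with two distinct rank-one members, $[e_1,e_1]=e_3,\ [e_2,e_2]=e_4,\ S(e_1,e_2)=0$; and the \emph{tangent} pencil with a single (double) rank-one member, $[e_1,e_1]=e_3,\ S(e_1,e_2)=\tfrac12 e_4,\ S(e_2,e_2)=0$. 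For each I would compute the stabilizer in $GL(V)\times GL(A^2)$ and classify the orbits of $k$. For the secant form the stabilizer is the monomial group, under which $k_3k_4$ is invariant: this yields $k=0$ (the split algebra ${\rm span}\{e_1,e_3\}\oplus{\rm span}\{e_2,e_4\}$, discarded), the orbit of $k$ on a coordinate axis ($\to\ca_{19}$), and a one-parameter family for $k_3k_4\neq0$ ($\to\ca_{18}$). For the tangent form the stabilizer is only lower-triangular, acting by $k_3\mapsto\tfrac{s}{p}k_3,\ k_4\mapsto k_4-\tfrac{r}{p}k_3$; so $k_3\neq0$ is a single orbit ($\to\ca_{16}$), while $k_3=0$ leaves $k_4$ invariant, giving the family $[e_1,e_1]=e_3,\,[e_1,e_2]=e_4,\,[e_2,e_1]=\alpha e_4$ that specializes to $\ca_{14}$ ($\alpha=0$), $\ca_{15}$ ($\alpha=\infty$) and $\ca_{17}$. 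A short list of explicit base changes then matches each normal form to the stated generators.

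The main obstacle is the stabilizer-and-orbit bookkeeping in this last step, and it is genuinely subtle because the absence of antisymmetry removes the left--right swap. In the tangent case $\ca_{14}$ and $\ca_{15}$ share the same $S$ and differ only by the sign of $k$, yet are non-isomorphic precisely because the lower-triangular stabilizer cannot interchange the two sides, whereas in the secant case the monomial swap \emph{does} identify the two axes into the single class $\ca_{19}$. I would therefore need to (i) verify that the parameter in $\ca_{17}$ and $\ca_{18}$ is a true isomorphism invariant (for $\ca_{17}$ it is $k_4$, which is fixed by the stabilizer, so distinct values are distinct; for $\ca_{18}$ it is essentially $k_3k_4$), and (ii) account for the excluded and boundary values --- $\alpha=-1$ cannot occur since it forces $S$ non-surjective, i.e. $\dim Leib(A)=1$, and $\alpha=0$ in $\ca_{17}$ collapses to $\ca_{14}$ --- as well as confirm exhaustiveness so that the six orbits exactly reproduce $\ca_{14}$ through $\ca_{19}$.
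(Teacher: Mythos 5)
Your approach is genuinely different from the paper's. The paper fixes $Leib(A)=A^2=Z(A)=\mathrm{span}\{e_3,e_4\}$ and runs a case analysis on the dimensions of $Z^l(A)$, $Z^r(A)$ and of a maximal abelian subalgebra $H\supseteq Leib(A)$, producing each class by explicit (sometimes elaborate) changes of basis. Your reduction to a single $A^2$-valued bilinear form $\mu$ on a complement $V$, the observation that all Leibniz identities hold automatically, the splitting $\mu=S+K$, and the classification of $S$ as a secant or tangent pencil of binary quadratics followed by an orbit analysis of $k$ under the stabilizer is cleaner, explains structurally why $\ca_{17}$ and $\ca_{18}$ carry a genuine parameter, and is more likely to generalize. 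The orbit count (one singleton plus a one-parameter family in each pencil type, after discarding $k=0$ in the secant case as split) matches the six classes, and your tangent-case identifications ($k_3\neq0\to\ca_{16}$; $k_3=0$ with $k_4$ invariant $\to\ca_{14},\ca_{15},\ca_{17}$, with $\alpha=-1$ excluded because $S$ would drop rank) check out.

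However, your secant-case dictionary is wrong, and since matching the orbits to the stated list is part of the theorem this must be repaired. Take $\ca_{18}$ with $\alpha=0$, i.e.\ $[x_1,x_1]=x_3$, $[x_2,x_1]=x_4$, $[x_2,x_2]=-x_4$. Setting $f_1=x_1+\tfrac12x_2$, $f_2=-\tfrac12x_2$, $e_3=x_3+\tfrac14x_4$, $e_4=-\tfrac14x_4$ gives $[f_1,f_1]=e_3$, $[f_2,f_2]=e_4$, $[f_1,f_2]=-e_4=-[f_2,f_1]$: this is your secant normal form with $k=-e_4$ on a coordinate axis. So the axis orbit is $\ca_{18}|_{\alpha=0}$, not $\ca_{19}$. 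Conversely, diagonalizing the pencil of $\ca_{19}$ (its rank-one members are $(a+(1\pm i)b)^2$) places its antisymmetric part at $k=\tfrac12e_3-\tfrac12e_4$, so $k_3k_4=-\tfrac14\neq0$ and $\ca_{19}$ lies inside your one-parameter family. The correct correspondence is: axis orbit $\leftrightarrow\ca_{18}|_{\alpha=0}$, and the family $k_3k_4=c\in\cc\setminus\{0\}$ $\leftrightarrow\{\ca_{18}|_{\alpha\neq0,-1}\}\cup\{\ca_{19}\}$ with $\ca_{19}$ at the single value $c=-\tfrac14$; in particular $k_3k_4$ is not ``essentially $\alpha$'' uniformly over $\alpha\in\cc\setminus\{-1\}$. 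The rest of the plan goes through once this bookkeeping is corrected.
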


\begin{proof} Since $A$ is non-split nilpotent Leibniz algebra we have $A^2=Z(A)$. Choose $Leib(A)=A^2=Z(A)={\rm span}\{e_3, e_4\}$. Let $H$ be a maximal abelian subalgebra of $A$ containing $Leib(A)$. Then $\rm dim(H)=2 \ \rm or \ 3$. Notice that if $\dim(Z^l(A))=3=\dim(Z^r(A))$ then $\dim(A^2)=1$ which is a contradiction. Therefore, we can split the problem into three cases:

\par Let $\dim(Z^l(A))=3$ and $\dim(Z^r(A))=2$. We have $Leib(A)=A^2=Z(A)=Z^r(A)={\rm span}\{e_3, e_4\}$. Extend this to a basis $\{e_2, e_3, e_4\}$ for $Z^l(A)$. Choose $e_1\in A$ such that $[e_1, e_1]=e_3$. Then the nontrivial products in $A={\rm span}\{e_1, e_2, e_3, e_4\}$ are given by $[e_1, e_1]=e_3, [e_1, e_2]=\alpha_1e_3+ \alpha_2e_4$. Here $\alpha_2\neq0 \  \rm since \ \dim(A^2)=2.$ The change of basis $x_1=e_1, x_2=e_2, x_3=e_3, x_4=\alpha_1e_3+ \alpha_2e_4$ gives that $A$ is isomorphic to the algebra $\ca_{14}$.

\par Let $\dim(Z^l(A))=2$ and $\dim(Z^r(A))=3$. We have $Leib(A)=A^2=Z(A)=Z^l(A)={\rm span}\{e_3, e_4\}$. Extend this to a basis $\{e_2, e_3, e_4\}$ for $Z^r(A)$. Choose $e_1\in A$ such that $[e_1, e_1]=e_3$. Then the nontrivial products in $A={\rm span}\{e_1, e_2, e_3, e_4\}$ are given by $[e_1, e_1]=e_3, [e_2, e_1]=\alpha_1e_3+ \alpha_2e_4 \quad  (\alpha_2\neq0 \  \rm since \ \dim(A^2)=2)$. From the base change  $x_1=e_1, x_2=e_2, x_3=e_3, x_4=\alpha_1e_3+ \alpha_2e_4$ we see that $A$ is isomorphic to the algebra $\ca_{15}$.

\par Let $\dim(Z^l(A))=2=\dim(Z^r(A))$ and $\dim(H)=3$. We have $Leib(A)=A^2=Z(A)=Z^l(A)=Z^r(A)={\rm span}\{e_3, e_4\}$. Extend this to a basis $\{e_2, e_3, e_4\}$ for $H$. Choose $e_1\in A$ such that $[e_1, e_1]=e_3$. Then the nontrivial products in $A={\rm span}\{e_1, e_2, e_3, e_4\}$ are given by $[e_1, e_1]=e_3, [e_1, e_2]=\alpha_1e_3+\alpha_2e_4, [e_2, e_1]=\beta_1e_3+\beta_2e_4.$
Note that $\alpha_2+\beta_2\neq0$ since $\dim(Leib(A))=2$. Let $\alpha_1\beta_2\neq\alpha_2\beta_1$. Then the base change $x_1=e_2, x_2=\frac{\alpha_2\beta_1-\alpha_1\beta_2}{\alpha_2+\beta_2}e_1-\frac{\alpha_2}{\alpha_2+\beta_2}e_2, x_3=\frac{\alpha_2\beta_1-\alpha_1\beta_2}{\alpha_2+\beta_2}\alpha_1e_3+\frac{\alpha_2\beta_1-\alpha_1\beta_2}{\alpha_2+\beta_2}\alpha_2e_4, x_4=\frac{\alpha_2\beta_1-\alpha_1\beta_2}{\alpha_2+\beta_2}\beta_1e_3+\frac{\alpha_2\beta_1-\alpha_1\beta_2}{\alpha_2+\beta_2}\beta_2e_4$ gives that $A$ is isomorphic to the algebra $\ca_{16}$.  Let $\alpha_1\beta_2=\alpha_2\beta_1$. Note that if $\alpha_2=0$ then $\alpha_1\beta_2=0$. Hence $\beta_2\neq0$(since $\alpha_2+\beta_2\neq0$) implies that $\alpha_1=0$. But then $\dim(Z^r(A))=3$ which is a contradiction. Similarly if $\beta_2=0$ then $\dim(Z^l(A))=3$ which is a contradiction. Hence $\alpha_2, \beta_2\neq0$. Then the change of basis $x_1=e_1, x_2=e_2,  x_3=e_3,  x_4=\alpha_1e_3+\alpha_2e_4$ shows that $A$ is isomorphic to the algebra $\ca_{17}$.

\par Let $\dim(Z^l(A))=2=\dim(Z^r(A))=\dim(H)$. Then we have $H=Leib(A)=A^2=Z(A)=Z^l(A)=Z^r(A)={\rm span}\{e_3, e_4\}$. Choose $e_1, e_2\in A$ such that $[e_1, e_1]=e_3, [e_2, e_2]=e_4$. Then the products in $A={\rm span}\{e_1, e_2, e_3, e_4\}$ are given by $[e_1, e_1]=e_3, [e_1, e_2]=\alpha_1e_3+\alpha_2e_4, [e_2, e_1]=\beta_1e_3+\beta_2e_4, [e_2, e_2]=e_4.$ 
\\
Suppose  $\alpha_2=0$. Then the nonzero products in $A$ are given by:
\begin{equation} \label{alg1}
[e_1, e_1]=e_3, [e_1, e_2]=\alpha_1e_3, [e_2, e_1]=\beta_1e_3+\beta_2e_4, [e_2, e_2]=e_4. 
\end{equation}
If $\alpha_1=0$ then we have the nonzero products in $A$ become:
\begin{equation} \label{alg2}
[e_1, e_1]=e_3, [e_2, e_1]=\beta_1e_3+\beta_2e_4, [e_2, e_2]=e_4. 
\end{equation}
If $\beta_1\neq0$ then $\beta_1\beta_2\neq1$ since otherwise $\dim(H)=3$.  In this case the change of basis $x_1=-\beta_1e_1+e_2,  x_2=\beta_1e_1,  x_3=(1-\beta_1\beta_2)e_4,  x_4=-\beta^2_1e_3$
shows that $A$ is isomorphic to the algebra $\ca_{18}$ where $\alpha=\frac{\beta_1\beta_2}{1-\beta_1\beta_2}\in \cc\backslash\{-1\}$. If $\beta_1=0$ then $\beta_2\neq0$ since $A$ is non-split. In such case the base change $x_1=\frac{1}{\beta_2}e_1,  x_2=-e_2,  x_3=\frac{1}{\beta^2_2}e_3,  x_4=-e_4$ shows that $A$ is isomorphic to the algebra $\ca_{18}$ where $\alpha=0$. Now suppose $\alpha_1\neq0$. If $\alpha_1\beta_2=1$, then $\beta_1\neq0$, because $\beta_1=0$ implies $\dim(Z^r(A))=3$. The change of basis $x_1=\alpha^{1/3}_1\beta^{1/6}_1e_1, x_2=\frac{\alpha^{5/6}_1+\alpha^{1/3}_1\beta^{1/2}_1}{\beta^{1/3}_1}e_1-\frac{1}{\alpha^{1/6}_1\beta^{1/3}_1}e_2, x_3=\alpha^{2/3}_1\beta^{1/3}_1e_3, x_4=\frac{\alpha^{7/6}_1-\alpha^{1/6}_1\beta_1}{\beta^{1/6}_1}e_3-\frac{1}{\alpha^{5/6}_1\beta^{1/6}_1}e_4$ shows that $A$ is isomorphic to the algebra $\ca_{19}$. If $\alpha_1\beta_2\neq1$, then from the base change $x_1=\alpha_1e_1, x_2=\alpha_1e_1-e_2, x_3= \alpha^2_1e_3, x_4= -\alpha_1\beta_1e_3+(1-\alpha_1\beta_2)e_4$ we see that $A$ is isomorphic to an algebra with the nonzero products given by (\ref{alg2}). Hence, as above $A$ is isomorphic to 
$\ca_{18}$.
\\ 
Now suppose $\alpha_2\neq0$. If $(\alpha_1+\beta_1)(\alpha_2+\beta_2)=1$ then $\dim(H)=3$. Hence $(\alpha_1+\beta_1)(\alpha_2+\beta_2)\neq1$. Suppose $\beta_2=0$. If $\beta_1=0$ then with the change of basis 
$x_1=e_2, x_2=e_1, x_3=e_4, x_4=e_3$ we see that $A$ is isomorphic to an algebra with nonzero products given by (\ref{alg2}), hence as before $A$ is isomorphic to 
$\ca_{18}$. So assume $\beta_1\neq0$. If $\alpha_2\beta_1=1$ then $\alpha_1\alpha_2\neq0$, so $\alpha_1\neq0$. Then the base change $x_1=\frac{1}{\alpha^{1/3}_1\alpha^{5/6}_2}e_1-\frac{\alpha^{1/6}_2}{\alpha^{1/3}_1}e_2, x_2=\frac{1+(\alpha_1\alpha_2)^{1/2}}{\alpha^{1/3}_1\alpha^{5/6}_2}e_1-\frac{\alpha^{1/6}_2}{\alpha^{1/3}_1}e_2, x_3=-\frac{\alpha^{1/3}_1}{\alpha^{2/3}_2}e_3, x_4=\frac{1-\alpha_1\alpha_2}{\alpha^{1/6}_1\alpha^{7/6}_2}e_3-\frac{\alpha^{5/6}_2}{\alpha^{1/6}_1}e_4$ shows that $A$ is isomorphic to the algebra $\ca_{19}$. If $\alpha_2\beta_1\neq1$, then the change of basis $x_1=\beta_1e_1-e_2,  x_2=e_1, x_3=-\alpha_1\beta_1e_3+(1-\beta_1\alpha_2)e_4,  x_4=e_3$ shows that $A$ is isomorphic to an algebra with the nonzero products given by (\ref{alg2}). So as before $A$ is isomorphic to  $\ca_{18}$. Now suppose $\beta_2\neq0$. If $\beta_1=0$ the base change $x_1=e_2, x_2=e_1, x_3=e_4, x_4=e_3$ yields that $A$ is isomorphic to an algebra with the nonzero products given by (\ref{alg1}), hence as above $A$ is isomorphic to $\ca_{18}$ or $\ca_{19}$.
If $\beta_1\neq0$ then the change of basis $x_1=-\frac{1+s\beta_2}{\alpha_2}e_1+e_2, x_2=se_1+e_2, x_3=[(\frac{1+s\beta_2}{\alpha_2})^2-\frac{(\alpha_1+\beta_1)(1+s\beta_2)}{\alpha_2}]e_3+[1-\frac{(\alpha_2+\beta_2)(1+s\beta_2)}{\alpha_2}]e_4, x_4=s(s+\alpha_1+\beta_1)e_3+ (1+s(\alpha_2+\beta_2))e_4$ where $s=\frac{-1-\alpha_1\beta_2+\alpha_2\beta_1+\sqrt{(1+\alpha_1\beta_2-\alpha_2\beta_1)^2-4\alpha_1\beta_2}}{2\beta_2}$ shows that $A$ is isomorphic to an algebra with nonzero products given by (\ref{alg2}), hence $A$ is isomorphic to $\ca_{18}$ as above.

\end{proof}

\begin{thm} Let $A$ be a $4-$dimensional non-split non-Lie nilpotent Leibniz algebra with $\dim(A^2)=2=\dim(Leib(A))$ and $dim(A^3)=1$. Then $A$ is isomorphic to a Leibniz algebra spanned by $\{x_1, x_2, x_3, x_4\}$ with the nonzero products given by the following:
\begin{description}
\item[$\ca_{20}$] $[x_1, x_2]=x_3, [x_1, x_3]=x_4$.
\item[$\ca_{21}$] $[x_1, x_2]=x_3, [x_2, x_2]=x_4, [x_1, x_3]=x_4$.
\item[$\ca_{22}$] $[x_1, x_2]=x_3, [x_2, x_1]=x_4, [x_1, x_3]=x_4$.
\item[$\ca_{23}$] $[x_1, x_2]=x_3, [x_2, x_1]=x_4, [x_2, x_2]=x_4, [x_1, x_3]=x_4$.
\item[$\ca_{24}$] $[x_1, x_1]=x_3, [x_2, x_1]=x_4, [x_1, x_3]=x_4$.
\item[$\ca_{25}$] $[x_1, x_1]=x_3, [x_2, x_2]=x_4, [x_1, x_3]=x_4$.
\end{description}
\end{thm}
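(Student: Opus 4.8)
\emph{The plan is to} reduce an arbitrary such $A$ to a tight normal form and then run a short orbit analysis. First I would record the structural constraints. Since $A$ is non-split nilpotent with $\dim(A^2)=2$ and $\dim(A^3)=1$, strictness of the lower central series forces $A^4=0$, so $A$ has class $3$ and $A^3\subseteq Z(A)$; moreover $Leib(A)=A^2$ because $Leib(A)\subseteq A^2$ and both are $2$-dimensional. The essential input is that in any (left) Leibniz algebra $Leib(A)\subseteq Z^{l}(A)$: rearranging the Leibniz identity gives $[[a,b]+[b,a],c]=0$, and by polarization $Leib(A)=\mathrm{span}\{[a,b]+[b,a]\}$, whence $[Leib(A),A]=0$. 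Fixing $A^3=\mathrm{span}\{e_4\}\subseteq A^2=Leib(A)=\mathrm{span}\{e_3,e_4\}$ and $A=\mathrm{span}\{e_1,e_2,e_3,e_4\}$, this kills every product $[e_3,-]$, $[e_4,-]$, $[-,e_4]$, so the only possibly nonzero products are the four ``top'' brackets $[e_i,e_j]$ $(i,j\in\{1,2\})$, valued in $\mathrm{span}\{e_3,e_4\}$, together with $[e_1,e_3],[e_2,e_3]\in\mathrm{span}\{e_4\}$.

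Next I would normalize. Since $A^3=[A,A^2]=\mathrm{span}\{[e_1,e_3],[e_2,e_3]\}\neq 0$, after a change of basis of the complement $\langle e_1,e_2\rangle$ I may assume $[e_1,e_3]=e_4$ and $[e_2,e_3]=0$. Substituting this into the relevant Leibniz identities forces the $e_3$-components of $[e_2,e_1]$ and $[e_2,e_2]$ to vanish, leaving the six-parameter family
\[
[e_1,e_1]=a_1e_3+b_1e_4,\quad [e_1,e_2]=a_2e_3+b_2e_4,\quad [e_2,e_1]=b_3e_4,\quad [e_2,e_2]=b_4e_4,\quad [e_1,e_3]=e_4,
\]
with $(a_1,a_2)\neq(0,0)$, equivalently $\dim(A^2)=2$. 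The shifts $e_3\mapsto e_3+\beta e_4$ and $e_i\mapsto e_i+(\text{element of }A^2)$ clear $b_1,b_2$, so the decisive data are the quadratic form $q\colon V\to A^2/A^3$, $\bar v\mapsto \overline{[v,v]}$, on $V=A/A^2$ (well defined since $[v+w,v+w]\equiv[v,v]\bmod A^3$ for $w\in A^2$), together with the residual coefficients $b_3,b_4$.

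Then I would split on the invariant $\mathrm{rank}\,q\in\{1,2\}$. Writing $\lambda(v)=[v,e_3]\in A^3$, the Leibniz relations force $q$ to vanish on the canonical line $\ker\lambda=\langle e_2\rangle$, so $q(t_1e_1+t_2e_2)=a_1t_1^2+a_2t_1t_2$. If $\mathrm{rank}\,q=2$ (that is $a_2\neq0$) I replace $e_1$ by $e_1-(a_1/a_2)e_2$ to arrange $a_1=0,\ a_2=1$, reducing to $[e_1,e_2]=e_3$, $[e_2,e_1]=b_3e_4$, $[e_2,e_2]=b_4e_4$, $[e_1,e_3]=e_4$; scaling $e_1,e_2$ then normalizes $b_3,b_4\in\{0,1\}$, producing $\ca_{20},\ca_{21},\ca_{22},\ca_{23}$ according to the vanishing pattern of $(b_3,b_4)$. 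If $\mathrm{rank}\,q=1$ (forcing $a_2=0,\ a_1\neq0$) I scale to $a_1=1$, obtaining $[e_1,e_1]=e_3$, $[e_2,e_1]=b_3e_4$, $[e_2,e_2]=b_4e_4$, $[e_1,e_3]=e_4$; when $b_4\neq0$ the shear $e_1\mapsto e_1-(b_3/b_4)e_2$ kills $b_3$ and yields $\ca_{25}$, whereas $b_4=0$ forces $b_3\neq0$ (else $A$ splits) and yields $\ca_{24}$. Finally I would confirm that the six algebras are pairwise non-isomorphic using the invariants $\mathrm{rank}\,q$, $\dim(Z^{l}(A))$, $\dim(Z^{r}(A))$, and the class of $[e_2,e_2]$ on the canonical line $\ker\lambda$.

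\emph{The main obstacle will be} the coupled bookkeeping of the reductions: every adjustment of $e_1,e_2$ used to normalize $q$ simultaneously perturbs $b_3,b_4$ and re-creates the $b_1,b_2$ and $[e_1,e_2]$ terms already cleared, so I must order the moves so that each normalization is stable under the later ones, exploiting that $e_2\in\ker\lambda$ is canonical and hence $[e_2,e_2]\in A^3$ is well defined up to scale. The genuinely delicate point will be separating $\ca_{22}$ from $\ca_{23}$: both have $\mathrm{rank}\,q=2$ and $(\dim Z^{l}(A),\dim Z^{r}(A))=(2,1)$, and they are distinguished only by whether $[e_2,e_2]=0$ for $e_2$ spanning $\ker\lambda$, which I must verify is a genuine isomorphism invariant rather than an artifact of the chosen basis.
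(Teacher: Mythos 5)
Your proposal is correct, and it reaches the same two normal-form families as the paper (the paper's families (\ref{alg7}) and (\ref{alg8})) by the same basic skeleton: a basis adapted to $A^3\subset A^2\subset A$, the observation that $Leib(A)=A^2$ lies in the left center and $A^3$ in the center, the two Leibniz identities that kill the $e_3$-components of $[e_2,e_1]$ and $[e_2,e_2]$, and a final normalization of the residual $e_4$-coefficients. The genuine difference is in how the case analysis is organized. The paper branches on the vanishing of individual structure constants ($\gamma_2$, then $\alpha_1,\alpha_2,\alpha_3,\alpha_4,\beta_1,\beta_2,\beta_4$, \dots) and exhibits an explicit, sometimes elaborate, base change for each branch (including ones with fractional powers of the parameters), repeatedly routing back to (\ref{alg7}) or (\ref{alg8}). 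You instead organize the reduction around intrinsic data: the functional $\lambda(v)=[v,e_3]$ on $A/A^2$ (whose kernel canonically singles out $e_2$), the quadratic form $q:A/A^2\to A^2/A^3$, and the dichotomy $\operatorname{rank}q\in\{1,2\}$, with the $A^2$-shifts clearing $b_1,b_2$ once and for all. This buys two things: the messy explicit isomorphisms collapse to shears and rescalings whose effect on $(b_3,b_4)$ is transparent, and the invariants $\operatorname{rank}q$, $\dim Z^l$, $\dim Z^r$, and the class of $[e_2,e_2]\in A^3$ for $e_2$ spanning $\ker\lambda$ give a proof that the six algebras are pairwise non-isomorphic --- a fact the paper only verifies by a Mathematica computation at the end (and which the theorem as stated does not strictly require). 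The price is the bookkeeping you yourself flag: each shear re-creates the cleared $b_1,b_2$ terms, so the order of moves matters; your ordering (normalize $\lambda$, then $q$, then clear the $A^2$-shifts, then rescale) is consistent, and the delicate invariance of "$[e_2,e_2]=0$ on $\ker\lambda$" does hold because $[e_2+w,e_2+w]=[e_2,e_2]$ for $w\in A^2$ when $e_2\in\ker\lambda$. I see no gap.
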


\begin{proof} Since $A$ is non-split nilpotent Leibniz algebra we have $A^3\subseteq Z(A)$ and $Z(A)\subseteq A^2$. Then $A^3=Z(A)$. Choose $A^3=Z(A)={\rm span}\{e_4\}$. Extend this to a basis $\{e_3, e_4\}$ of $A^2=Leib(A)$. Then the products in $A={\rm span}\{e_1, e_2,  e_3, e_4\}$ are given by $[e_1, e_1]=\alpha_1e_3+\alpha_2e_4, [e_1, e_2]=\alpha_3e_3+\alpha_4e_4, [e_2, e_1]=\beta_1e_3+\beta_2e_4, [e_2, e_2]=\beta_3e_3+\beta_4e_4, [e_1, e_3]=\gamma_1e_4, [e_2, e_3]=\gamma_2e_4.$
From the Leibniz identities
$$
[e_1, [e_2, e_1]]=[[e_1, e_2], e_1]+ [e_2, [e_1, e_1]] \ \  \rm{and} \ \ [e_2, [e_1, e_2]]=[[e_2, e_1], e_2]+ [e_1, [e_2, e_2]] 
$$
we obtain the following equations:
\begin{equation} \label{leib2}
\begin{cases}
\gamma_1\beta_3=\alpha_1\gamma_2 \\
\gamma_2\alpha_3=\beta_1\gamma_1
\end{cases}
\end{equation}

\par Let $\gamma_2=0$. Then $\gamma_1\neq0$ since $A^3\neq0$. From (\ref{leib2}) we have $\beta_1=0=\beta_3$. Hence we have the following products in $A$:
\begin{equation} \label{alg5}
[e_1, e_1]=\alpha_1e_3+\alpha_2e_4, [e_1, e_2]=\alpha_3e_3+\alpha_4e_4, [e_2, e_1]=\beta_2e_4, [e_2, e_2]=\beta_4e_4, [e_1, e_3]=\gamma_1e_4.
\end{equation}
Let $\alpha_1=0$. Then $\alpha_3\neq0$ since $A^2\neq0$. Then we have the following products in $A$:
\begin{equation} \label{alg6}
[e_1, e_1]=\alpha_2e_4, [e_1, e_2]=\alpha_3e_3+\alpha_4e_4, [e_2, e_1]=\beta_2e_4, [e_2, e_2]=\beta_4e_4, [e_1, e_3]=\gamma_1e_4.
\end{equation}
Suppose $\alpha_2=0$. Then the products in $A$ are the following:
\begin{equation} \label{alg7}
[e_1, e_2]=\alpha_3e_3+\alpha_4e_4, [e_2, e_1]=\beta_2e_4, [e_2, e_2]=\beta_4e_4, [e_1, e_3]=\gamma_1e_4.
\end{equation}
If $\beta_2=0=\beta_4$ the base change $x_1=e_1, x_2=e_2, x_3=\alpha_3e_3+\alpha_4e_4, x_4=\alpha_3\gamma_1e_4$ shows that $A$ is isomorphic to the algebra $\ca_{20}$. If $\beta_2=0$ and $\beta_4\neq0$ the base change $x_1=e_1, x_2=\frac{\alpha_3\gamma_1}{\beta_4}e_2, x_3=\frac{\alpha_3\gamma_1}{\beta_4}(\alpha_3e_3+\alpha_4e_4), x_4=\frac{(\alpha_3\gamma_1)^2}{\beta_4}e_4$ yields that $A$ is isomorphic to the algebra $\ca_{21}$. If $\beta_2\neq0$ and $\beta_4=0$ the base change $x_1=\frac{\beta_2}{\alpha_3\gamma_1}e_1, x_2=e_2, x_3=\frac{\beta_2}{\alpha_3\gamma_1}(\alpha_3e_3+\alpha_4e_4), x_4=\frac{\beta^2_2}{\alpha_3\gamma_1}e_4$ yields that $A$ is isomorphic to the algebra $\ca_{22}$. Finally, if $\beta_2\neq0$ and $\beta_4\neq0$ then the base change $x_1=\frac{\beta_2}{\alpha_3\gamma_1}e_1, x_2=\frac{\beta^2_2}{\alpha_3\gamma_1\beta_4}e_2, x_3=\frac{\beta^3_2}{\alpha^2_3\gamma^2_1\beta_4}(\alpha_3e_3+\alpha_4e_4), x_4=\frac{\beta^4_2}{\alpha^2_3\gamma^2_1\beta_4}$ shows that $A$ is isomorphic to the algebra $\ca_{23}$.
\\
Now suppose $\alpha_2\neq0$. Then the base change $x_1=\gamma_1e_1-\alpha_2e_3, x_2=e_2, x_3=e_3, x_4=e_4$ shows that $A$ is isomorphic to an algebra with nonzero products given by (\ref{alg7}), hence as above $A$ is isomorphic to $\ca_{20}$, $\ca_{21}$, $\ca_{22}$ or $\ca_{23}$.
\\
Now let $\alpha_1\neq0$. Suppose $\alpha_3=0$. If $\alpha_4=0$ then the nonzero products in $A$ are given by:
\begin{equation} \label{alg8}
[e_1, e_1]=\alpha_1e_3+\alpha_2e_4, [e_2, e_1]=\beta_2e_4, [e_2, e_2]=\beta_4e_4, [e_1, e_3]=\gamma_1e_4.
\end{equation} 
If $\beta_4=0$ then $\beta_2\neq0$ since $A$ is non-split. The base change $x_1=e_1, x_2=\frac{\alpha_1\gamma_1}{\beta_2}e_2, x_3=\alpha_1e_3+\alpha_2e_4, x_4=\alpha_1\gamma_1e_4$ shows that $A$ is isomorphic to the algebra $\ca_{24}$. If $\beta_4\neq0$ the base change $x_1=e_1-\frac{\beta_2}{\beta_4}e_2, x_2=(\frac{\alpha_1\gamma_1}{\beta_4})^{1/2}e_2+\frac{\alpha^{1/2}_1\beta_2}{(\beta_4\gamma_1)^{1/2}}e_3, x_3=\alpha_1e_3+\alpha_2e_4, x_4=\alpha_1\gamma_1e_4$ shows that $A$ is isomorphic to the algebra $\ca_{25}$.
Now assume $\alpha_4\neq0$. Then using the base change $x_1=e_1, x_2=\gamma_1e_2-\alpha_4e_3, x_3=e_3, x_4=e_4$ we see that $A$ is isomorphic to an algebra with nonzero products given by (\ref{alg8}). Hence, as above $A$ is isomorphic to $\ca_{24}$ or $\ca_{25}$. Suppose $\alpha_3\neq0$. Then using the change of basis $x_1=\alpha_3e_1-\alpha_1e_2, x_2=e_2, x_3=e_3, x_4=e_4$ we see that $A$ is isomorphic to an algebra with nonzero products given by (\ref{alg6}). Therefore, as above $A$ is isomorphic to $\ca_{20}$, $\ca_{21}$, $\ca_{22}$, $\ca_{23}$, $\ca_{24}$ or $\ca_{25}$.

\par Let $\gamma_2\neq0$.  If $\gamma_1=0$ (resp. $\gamma_1 \neq 0$) then using the base change 
$x_1=e_2, x_2=e_1, x_3=e_3, x_4=e_4$ (resp. $x_1=e_1, x_2=\gamma_2e_1-\gamma_1e_2, x_3=e_3, x_4=e_4$) we see that $A$ is isomorphic to an algebra with nonzero products given by 
(\ref{alg5}). Hence,  as above $A$ is isomorphic to $\ca_{20}$, $\ca_{21}$, $\ca_{22}$ or $\ca_{23}$..

\end{proof}

In summary, if $A$ is non-split non-Lie nilpotent Leibniz algebra of dimension four, then it is isomorphic to one of the isomorphism classes: $\ca_1, \ca_2, \ca_3, \ldots, \ca_{24}, \ca_{25}$. Using  the algorithm in Mathematica  given in \cite{cil}, we verified that these isomorphism classes are pairwise nonisomorphic. In \cite{fourdimnil}, there are 21 isomorphism classes but it can be seen that $\rf_4(\alpha)$ is isomorphic to $\ca_{23}$ (resp. $\ca_{21}$) if $\alpha = 0$ (resp. $\alpha = 1)$; $\rf_{14}(\alpha)$ is isomorphic to $\ca_6$ (resp. $\ca_5$) if $\alpha = 0$ (resp. $\alpha \neq 0$); $\rf_{20}(\alpha)$ is isomorphic to $\ca_{14}$ (resp. $\ca_{17}$) if $\alpha = 0$ (resp. $\alpha \neq 0 , 1$). Furthermore, the isomorphism class $\ca_1$ is not contained in their list which seems to be an error.

\end{document}